\documentclass[11pt,reqno]{amsart}
\usepackage{graphicx} 
\usepackage{amsmath}
\usepackage{amsfonts}
\usepackage{amssymb}
\usepackage{xcolor}
\usepackage{soul}
\usepackage{amsthm}
\usepackage{hyperref}
\usepackage{rotating}
\usepackage{enumitem}
\usepackage[a4paper,margin=1.25in,headheight=14pt]{geometry}

\def\R{\mathbb{R}}

\def\st{\, | \,}
\def\dt{\dotsc}

\def\fnm{\mathcal F_n^{\raisebox{1pt}{$\scriptstyle -$}}}

\DeclareMathOperator{\Ind}{Ind}
\DeclareMathOperator{\Res}{Res}

\theoremstyle{plain}
\newtheorem{thm}{Theorem}
\newtheorem{lemma}[thm]{Lemma}

\newtheorem{defi}[thm]{Definition}

\newtheorem{question}[]{Question}

\newenvironment{customthm}[1]
{\innercustomthm}
{\endinnercustomthm}
  
\theoremstyle{remark}

\newtheorem*{conj*}{Conjecture}
\newtheorem*{thm*}{Theorem}

\usepackage{parskip}
\setlength{\parskip}{5pt}
\raggedbottom

\begin{document}
\title{Spectral gap for the signed interchange process with arbitrary sets}
\author{Gil Alon}
\email{gilal@openu.ac.il}
\address{Department of Mathematics and Computer Science, The Open University of Israel}

\author{Subhajit Ghosh}
\email{gsubhajit@alum.iisc.ac.in}
\address{Department of Mathematics, Indian Institute of Technology Madras, Chennai 600 036, India}
\maketitle
\begin{abstract}
In 2020, F.Cesi introduced a random walk on the hyperoctahedral group $B_n$ and analysed its spectral gap when the allowed generators are transpositions and diagonal elements corresponding to singletons. In this paper we extend the allowed generators to transpositions and any diagonal elements, and characterise completely the set of representations from which the spectral gap arises. This settles a conjecture posed in Cesi's paper.
\end{abstract}
\section*{Introduction}

Let $G$ be a finite group, $S$ a symmetric set of generators ($S=S^{-1}$), and $(w_s)_{s\in S}$ a symmetric set of nonnegative weights ($w_s=w_{s^{-1}}$ for all $s \in S$). Let us attach a Poisson clock with rate $w_s$ to each generator $s\in S$, and consider the following continuous time random walk on the Cayley graph $\text{Cay}(G,S)$: We start, at time $t=0$, at the unit element $1_G$. Whenever the clock of a generator $s$ rings, we multiply the existing group element by $s$ from the left.

It has been long understood that the spectral gap of this process can be analysed by using the representation theory of $G$. Indeed, let $w=\sum w_s s$ (an element of the group ring $\R[G]$), and define its Laplacian by $\Delta(w)=\sum_{s\in S} w_s(1_G-s)$. For any finite dimensional complex representation $\sigma$ of $G$, the matrix $\sigma(
\Delta(w))$ is self-adjoint and nonnegative, and thus it has real, nonnegative eigenvalues. Such an eigenvalue is called trivial if the corresponding eigenvector is invariant under all the elements of $G$. Following Cesi (\cite{Cesi-octopus} and \cite{Cesi}), we will denote by $\psi_G(w,\sigma)$ the minimum nontrivial eigenvalue of $\sigma(\Delta(w))$.

Let us also denote
\begin{equation}
    \label{eq:def_psi_G}
    \psi_G(w)=\min_{\sigma} \psi_G(w,\sigma)
\end{equation}
where the minimum is taken over all nontrivial irreps (= irreducible representations) of $G$. Then, it is well known that the spectral gap of the continuous time random walk described above is $\psi_G(w)$.

It is natural to ask, given $G$ and $S$, whether all the nontrivial irreps are required in the minimum \eqref{eq:def_psi_G}, or it is enough to take the minimum on a subset of the nontrivial irreps. From a computational point of view, finding a subset (as small as possible) from which the spectral gap arises can save a lot of time in the calculation of the spectral gap. 

Let us specialize to the case where $G$ is the symmetric group $S_n$, and $S$ is the set of transpositions $\{(ij) \st 1\leq i < j \leq n\}$. In this case the random walk is called \textit{the interchange process}. It was first analysed (in the mean-field case) by Diaconis and Shahshahani (\cite{DS}).

Recall that the irreps of $S_n$ are parametrized by partitions of $n$, or, equivalently, Young diagrams of size $n$. In 1992, David Aldous famously conjectured (\cite{Ald._conj}) that for any choice of weights for the transpositions, the spectral gap always arises from a single irrep -- the one corresponding to the partition $[n-1,1]$.

After many attempts and partial results by various authors, the Aldous conjecture was proved in 2009 by Caputo, Liggett and Richthammer (\cite{CLR}). This positive result warrants the question, whether similar results can be obtained for other groups and sets of generators. More generally, one can ask whether for specific families of group ring elements $w=\sum_{g\in G} w_g g$, the spectral gap $\psi_G(w)$ arises from a restricted set of irreps.

These questions have been in the focus of several works in the past 16 years. Caputo has conjectured that for $G=S_n$, if $w$ is a nonnegative combination of elements of the form $\alpha_A=\sum_{\pi \in S_A} \pi$ (where $A$ is any subset of $\{1,\dt,n\}$), then the spectral gap $\psi_{S_n}(w)$ comes from $[n-1,1]$. Note that this is a generalization of the Aldous conjecture. Some partial cases of this conjecture have been proved by Bristiel and Caputo (\cite{BC24}) and by Kozma, Puder and the first author (\cite{AKP}), but the general case remains open. Parzanchevski and Puder proved (\cite{PP}) that when $w$ is the sum of the elements in a single conjugacy class of $S_n$, then the spectral gap $\psi_{S_n}(w)$ comes from one of 8 representations. 

In 2020, Cesi (\cite{Cesi}) has proved a similar result for the hyperoctahedral group $G=B_n$, which we shall now describe in detail.
Recall that $B_n$ is defined as the wreath product $G = C_2 \wr S_n$. Conveniently, $G$ can be considered as the group of $n\times n$ integer valued matrices, where each row and each column has only one nonzero entry, which is equal to either $1$ or $-1$. 
This group contains, in particular, the subgroup $S_n$, realized as the permutation matrices, and the normal subgroup $N_n \cong (C_2)^n$, realized as the diagonal matrices with $\pm 1$ entries. For each subset $A\subseteq \{1,\dt,n\}$, we shall denote by $s_A$ the element of $N_n$ whose diagonal entries are $-1$ exactly at the locations determined by $A$.

The isomorphism classes of irreps of $G$ are parametrized by pairs of Young diagrams $(\sigma_1,\sigma_2)$ where $|\sigma_1|+|\sigma_2|=n$. When no risk of confusion arises, we shall sometimes write $(\sigma_1, \sigma_2)$ to denote an arbitrary irrep corresponding to it. We note that $([n],\emptyset)$ corresponds to the trivial irrep.

Cesi's main result is the following:

\begin{thm*}[F. Cesi, 2020]
    Let $w=w_T+w_N$ be an element of the group ring $\R[B_n]$, where $w_T$ is a nonnegative combination of transpositions $(ij)\in S_n$, and $w_N$ is a nonnegative combination of the elements $s_{\{i\}}$, for $1\leq i \leq n$. Then we have
    $$ \psi_{B_n}(w)=\psi_{B_n}(w, P_n).$$
\end{thm*}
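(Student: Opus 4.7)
I would prove by induction on $n$ that $\psi_{B_n}(w, \sigma) \geq \psi_{B_n}(w, P_n)$ for every nontrivial irrep $\sigma = (\sigma_1, \sigma_2) \neq P_n$. The base case $n=1$ is immediate since $B_1 = C_2$ has only two irreps. For the inductive step, write $\Delta(w) = \Delta(w_T) + \Delta(w_N)$; both summands are self-adjoint and positive semi-definite in any unitary representation, so the variational principle gives
\[
\psi_{B_n}(w, \sigma) = \min_{v \perp V_0^\sigma,\, \|v\|=1} \bigl( \langle \sigma(\Delta(w_T)) v, v \rangle + \langle \sigma(\Delta(w_N)) v, v \rangle \bigr),
\]
where $V_0^\sigma$ denotes the $\sigma$-isotypic of the trivial representation.

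\textbf{Bounding the two summands.} For the transposition part, I would combine the Aldous--Caputo--Liggett--Richthammer theorem on $S_n$ with the restriction formula $\Res^{B_n}_{S_n}(\sigma_1, \sigma_2) = \bigoplus_\lambda c^\lambda_{\sigma_1 \sigma_2} \lambda$ to conclude that the minimum nontrivial eigenvalue of $\sigma(\Delta(w_T))$ is bounded below by $\psi_{S_n}(w_T, [n-1,1])$. For the sign-flip part, since $N_n \cong (C_2)^n$ is normal and abelian, $\sigma|_{N_n}$ decomposes into characters $\chi_A$; by Clifford theory the subsets $A$ that appear all satisfy $|A| = |\sigma_2|$, and on the $\chi_A$-isotypic $\Delta(w_N)$ acts as the scalar $2\sum_{i \in A}\alpha_i$, where $w_N = \sum_i \alpha_i s_{\{i\}}$ with $\alpha_i \geq 0$. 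On the target representation $P_n = ([n-1], [1])$ one can then realize $\sigma_{P_n}(\Delta(w))$ very concretely as an $n \times n$ matrix: the weighted-graph Laplacian of $w_T$ plus the diagonal matrix $2\,\mathrm{diag}(\alpha_1, \dt, \alpha_n)$, so that $\psi_{B_n}(w, P_n)$ is simply the smallest eigenvalue of this explicit matrix.

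\textbf{Combining the bounds; the main obstacle.} These two bounds cannot be added naively: $\sigma(\Delta(w_T))$ permutes the $\chi_A$-isotypics and therefore does not commute with $\sigma(\Delta(w_N))$. The core of the proof is an octopus-style inequality adapted to $B_n$: for a distinguished $k \in \{1,\dt,n\}$, the quadratic form of the ``signed star at $k$'' (the generators $(ik)$ for $i \neq k$ together with $s_{\{k\}}$, appropriately weighted) should dominate, up to an extra piece still nonnegative on $\sigma$, the quadratic form of the remaining generators acting through
\[
\Res^{B_n}_{B_{n-1}}(\sigma_1, \sigma_2) = \bigoplus_{\sigma_1'} (\sigma_1', \sigma_2) \oplus \bigoplus_{\sigma_2'} (\sigma_1, \sigma_2'),
\]
to which the inductive hypothesis applies. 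Case analysis on the shape of $(\sigma_1,\sigma_2)$ then identifies which summand is responsible for the minimum and shows that the comparison with $P_n$ is achieved.

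\textbf{Expected difficulty.} The principal technical hurdle is producing the correct octopus-type sum-of-squares identity in $\R[B_n]$. In the pure-$S_n$ case of CLR this reduces to a single nonnegative matrix identity for star-Laplacians; here one must accommodate the rank-one diagonal contribution $\alpha_k(1 - s_{\{k\}})$ alongside the transposition terms, and verify that the resulting comparison representation is precisely $P_n = ([n-1],[1])$ rather than a competitor such as $([n-1,1],\emptyset)$. Getting the signs and weights in this identity to line up, while preserving tightness on $P_n$, is where the bulk of the work will lie.
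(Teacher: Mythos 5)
The paper does not prove this statement: it is Cesi's theorem, quoted from \cite{Cesi}, and the paper's own Theorems \ref{thm:main}--\ref{thm:required_irreps} instead establish a coarser conclusion (the family $\mathcal F_n$ of $n+1$ irreps) for arbitrary $w_N$ via a short restriction-to-$S_n$ argument that deliberately avoids new octopus inequalities. There is therefore no in-paper proof to compare against, so I will assess your plan on its own terms.

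Your plan---induction, split $\Delta(w) = \Delta(w_T) + \Delta(w_N)$, and a $B_n$-octopus inequality---has the right overall shape, and you correctly identify the unbuilt octopus identity as the crux. But before reaching it there are two real errors. First, $P_n$ is not the irrep $([n-1],[1])$: it is a reducible $2n$-dimensional representation with components $([n-1,1],\emptyset)$, $([n-1],[1])$, and $([n],\emptyset)$. Treating $([n-1,1],\emptyset)$ as a rival to be beaten by $([n-1],[1])$ gets the statement backwards: when the $\alpha_i$ are large relative to $w_T$, the slow modes live where $N_n$ acts trivially and the gap comes from the $([n-1,1],\emptyset)$ component (the paper's Theorem \ref{thm:required_irreps}, Case 2, with $w_N^- = n\sum_i s_{\{i\}}$, constructs exactly such a $w$). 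The strategy as stated would attempt to prove something false. Second, the claimed bound ``$\psi_{S_n}(w_T,[n-1,1])$ bounds the minimum nontrivial eigenvalue of $\sigma(\Delta(w_T))$ from below'' already fails for $\sigma = ([n-k],[k])$: here $\Res^{B_n}_{S_n}\sigma$ contains the trivial $S_n$-irrep (the sum of all size-$k$ subsets is $S_n$-fixed but not $B_n$-fixed), so $\sigma(\Delta(w_T))$ has a nontrivial kernel and the relevant minimum is $0$, not $\psi_{S_n}(w_T,[n-1,1])$. The octopus inequality is thus needed not merely to ``combine'' two separate lower bounds but to replace the transposition estimate entirely on those irreps, and since it is not produced, the proposal remains a sketch rather than a proof.
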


Here, $P_n$ is a $2n$--dimensional representation of $B_n$ whose irreducible components correspond to the pairs $([n-1,1],\emptyset)$, $([n-1],[1])$ and $([n],\emptyset)$.

This result was later generalized by the second author (\cite{Ghosh}) to any wreath product of the form $G \wr S_n$, where $G$ is a finite group.

At the end of Cesi's paper, he asked whether this result could be extended to the case where the generators include the transpositions and all diagonal elements. He showed that this does not hold in general (with the representation $P_n$). He made the following, more restricted conjecture:

\begin{conj*}[F. Cesi, 2020]\label{conj:Cesi}
    Let $w=w_N^-+w_T$ be an element of the group ring $\R[B_n]$, where $w_T$ is a nonnegative combination of transpositions $(ij)\in S_n$, and $w_N^-$ is a nonnegative combination of the elements $s_A$, for the subsets $A \subseteq [1,\dt,n]$ of odd size. Then we have
    $$ \psi_{B_n}(w)=\psi_{B_n}(w, P_n).$$
\end{conj*}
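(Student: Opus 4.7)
Write $\tau_1 := ([n-1,1], \emptyset)$ and $\tau_2 := ([n-1],[1])$, so that $\psi_{B_n}(w, P_n) = \min\{\psi_{B_n}(w, \tau_1),\, \psi_{B_n}(w, \tau_2)\}$. The plan is to show, for every nontrivial irrep $\tau=(\sigma_1,\sigma_2)$ of $B_n$, that $\psi_{B_n}(w,\tau) \geq \psi_{B_n}(w, P_n)$. Since each $I - \tau(s)$ with $s$ an involution is positive semidefinite, both $\tau(\Delta(w_T))$ and $\tau(\Delta(w_N^-))$ are PSD, so $\lambda_{\min}(\tau(\Delta(w))) \geq \lambda_{\min}(\tau(\Delta(w_T)))$; moreover $\tau(\Delta(w_T))$ is block-diagonal with respect to the $S_n$-isotypic decomposition of $\tau|_{S_n}$.

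By Frobenius reciprocity applied to $\Ind_{S_n}^{B_n} 1 \cong \bigoplus_{q=0}^n ([n-q], [q])$, the trivial $S_n$-representation appears in $\tau|_{S_n}$ if and only if $\tau=([p],[q])$ for some $p+q=n$. If $\tau$ is \emph{not} of this form, every $S_n$-irrep in $\tau|_{S_n}$ is nontrivial, and Aldous--Caputo--Liggett--Richthammer applied to $w_T$ yields
$$\lambda_{\min}(\tau(\Delta(w_T))) \geq \lambda_{\min}([n-1,1](\Delta(w_T))) = \psi_{B_n}(w, \tau_1),$$
whence $\psi_{B_n}(w,\tau) \geq \psi_{B_n}(w, \tau_1) \geq \psi_{B_n}(w, P_n)$. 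If $\tau = ([n-q],[q])$ with $q=1$, then $\tau = \tau_2$ is already a constituent of $P_n$ and there is nothing to prove.

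The remaining (main) case is $\tau = ([n-q],[q])$ with $q \geq 2$. I plan to realize $\tau$ as the degree-$q$ Fourier summand $V_q \subseteq \R[N_n] \cong \Ind_{S_n}^{B_n} 1$, with eigenbasis $\{u_T\}_{|T|=q}$ satisfying $s_A u_T = (-1)^{|A \cap T|} u_T$ and $\pi u_T = u_{\pi T}$. In this basis
$$\tau(\Delta(w)) = L_J(w_T) + 2 D(w_N^-),$$
where $L_J$ is the weighted Johnson-type Laplacian on $\binom{[n]}{q}$ (edges $\{T, (ij)T\}$ with weight $c_{ij}$) and $D(w_N^-)$ is the diagonal matrix with entry $\sum_A c_A\,[|A \cap T|\text{ odd}]$ at $T$. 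The plan is to establish the spectral comparison
$$\lambda_{\min}\!\bigl(L_J(w_T)+2D(w_N^-)\bigr) \;\geq\; \lambda_{\min}\!\bigl(L(w_T) + 2\operatorname{diag}(d)\bigr) = \psi_{B_n}(w,\tau_2),$$
where $d_i = \sum_{A \ni i} c_A$, via an octopus-type PSD inequality on $B_n$ extending Cesi's lemma from singleton generators to arbitrary odd-sized $A$. The odd-size hypothesis is unavoidable at this stage: for even $|A|$, Cesi's counterexamples show that comparison to $P_n$ fails, so the PSD certificate must exploit the odd-parity of each $A$ in order to keep the parity potential $D(w_N^-)$ comparable to the singleton potential $\operatorname{diag}(d)$. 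The main obstacle is precisely building this certificate for $q \geq 2$: the Johnson-type $L_J$ has more structure than the ordinary graph Laplacian $L$, and $D(w_N^-)$ does not decompose as a sum of rank-one singleton contributions, so a genuinely new comparison argument, tailored to the odd-parity constraint, is needed to close the inequality.
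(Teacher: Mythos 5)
The conjecture you are trying to prove is \textbf{false}, and the paper's contribution on this point is precisely to disprove it (Theorem \ref{thm:required_irreps}). Your Steps 1 and 2 are correct and essentially match the paper's Lemma \ref{lem:no_trivial} and Theorem \ref{thm:stronger_main}: the restriction of $\tau$ to $S_n$ contains the trivial constituent iff $\tau \cong ([n-q],[q])$ for some $q$, and when no trivial constituent appears the ACLR theorem bounds $\psi_{B_n}(w,\tau)$ below by $\psi_{B_n}(w,([n-1,1],\emptyset))$. But the comparison you flag as the ``main obstacle'' in Step 3,
$$\lambda_{\min}\bigl(L_J(w_T) + 2D(w_N^-)\bigr) \;\geq\; \psi_{B_n}(w, \tau_2),$$
is false for every $q\geq 2$, and no octopus-type PSD certificate, odd-parity-aware or otherwise, can close it.

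The paper exhibits an explicit counterexample in Section \ref{sec:required irreps}. Fix $k \geq 2$, set $A = \{1,\dots,k\}$, $B = \{1,\dots,n\}$, and let $U = \{S \subseteq B : |S|\text{ odd},\ |S\cap A|\text{ even}\}$; note that every $S \in U$ has odd size. Take $w_N^- = 4\sum_{S\in U} s_S$ and $w_T = \frac{2^n}{n}\sum_{i<j}(ij)$. Using $\delta_U = \frac14(\chi_\emptyset + \chi_A - \chi_B - \chi_{B\setminus A})$, the Fourier transform of $\Delta(w_N^-)$ on $N_n \cong (C_2)^n$ equals $0$ at $S=\emptyset$ and $S=A$, equals $2\cdot 2^n$ at $S=B$ and $S=B\setminus A$, and equals $2^n$ everywhere else. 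In your notation, the diagonal potential $2D(w_N^-)$ on $q$-subsets has entry $\widehat{\Delta(w_N^-)}(S)$ at each basis vector $S$; in particular it \emph{vanishes} at $S=A$ when $q=k$, while on singletons it is uniformly $2^n$. Hence $\psi_{B_n}(w, ([n-i],[i])) \geq 2^n$ for all $0 < i \neq k$, and $\psi_{B_n}(w, ([n-1,1],\emptyset)) = 2^n$ exactly by Lemma \ref{lem:complete_graph_ev}; but the Rayleigh quotient of $\Delta(w)$ at the $S_n$-invariant vector $v = \sum_{|S|=k}S \in V_n^k$ is strictly below $2^n$ (and here $\Delta(w_T)v=0$). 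Therefore $\psi_{B_n}(w) = \psi_{B_n}(w, ([n-k],[k])) < \psi_{B_n}(w, P_n)$, contradicting Cesi's conjecture even though $w_N^-$ is supported on odd sets. The structural lesson is that for $q\geq 2$ the parity potential $D(w_N^-)$ is not forced to dominate the singleton potential: by placing the Fourier support of $\Delta(w_N^-)$ at a chosen $q$-set $A$, one kills the diagonal there while keeping it large at singletons, and the odd-size constraint on the supports of $w_N^-$ does nothing to prevent this.
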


In the current paper, we treat both Cesi's conjecture and the general problem of finding the spectral gap $\psi_{B_n}(w)$ for any nonnegative combination $w$ of transpositions and diagonal elements.

Let us consider the following collections:
$$ \mathcal F_n = \{ ([n-1,1],\emptyset)\} \cup \{ ([n-k],[k]) \st 1 \leq k \leq n\}$$
and

$$ \fnm = \{ ([n-1,1],\emptyset)\} \cup \{ ([n-k],[k]) \st 1 \leq k < n\} = \mathcal F_n \setminus \{(\emptyset, [n])\}$$

We will prove:
\begin{thm} \label{thm:main}
    Let $w_T=\sum_{1\leq i<j\leq n} a_{ij}(ij)$ and $w_N=\sum_{A \subseteq \{1,\dt,n\}}\alpha_As_A$ with any nonnegative weights $(a_{ij})$ and $(\alpha_A)$. Let $w_N^-=\sum_{A \subseteq \{1, \dt, n \} \, : \, |A| \text { is odd} } \alpha_A s_A$.
    
    Then we have:
    \begin{enumerate} 
    \item $ \psi_{B_n}(w_T+w_N)=\min \{ \psi_{B_n}(w_T+w_N, \sigma) \st \sigma \in \mathcal F_n\}.$
    \item $ \psi_{B_n}(w_T + w_N^-)=\min \{ \psi_{B_n}(w_T+w_N^-, \sigma) \st \sigma \in \fnm\}.$
    \end{enumerate}
    \end{thm}
As we shall see in Section \ref{sec:reps}, each pair $([n-k],[k])$ corresponds to an $\binom nk$ dimensional representation which has a natural basis corresponding to subsets of $\{1,\dt, n\}$ of size $k$. In particular, the total dimension of the representations corresponding to $\mathcal F_n$ is $2^n+n-2$. We therefore get an analogy between our theorem and the Aldous-Caputo-Liggett-Richthammer theorem: The latter reduces the search for the spectral gap from the regular representation (of dimension $n!$) to a representation of dimension $n-1$, whereas our theorem reduces the search from $2^n \cdot n!$ (= the size of $B_n$) to roughly $2^n$.

Our proof of Theorem \ref{thm:main} relies on the following result, which may be of independent interest:

\begin{thm} \label{thm:stronger_main}
    Let $\sigma$ be a nontrivial irrep of $B_n$ which does \textbf{not} correspond to any element of $\mathcal F_n$, and let $w=w_T+w_N$, where $w_T$ and $w_N$ be as in Theorem \ref{thm:main}. Then we have
    $$ \psi_{B_n}(w,\sigma) \geq \psi_{B_n}(w, ([n-1,1],\emptyset)). $$
\end{thm}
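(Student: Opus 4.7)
My plan is to reduce everything to the Aldous conjecture for $S_n$, using the observation that the contribution of $w_N$ to the Laplacian is positive semidefinite and can therefore simply be discarded. Concretely, $\Delta(w)=\Delta(w_T)+\Delta(w_N)$, and under any representation $\sigma$ both $\sigma(\Delta(w_T))$ and $\sigma(\Delta(w_N))$ are PSD: every generator $(ij)$ and $s_A$ is an involution, so $\sigma$ maps it to a self-adjoint unitary with spectrum in $\{\pm 1\}$, making $\sigma(1-s)$ PSD, and the weights are nonnegative. Hence $\sigma(\Delta(w))\succeq\sigma(\Delta(w_T))$, and since $\sigma$ is nontrivial irreducible, $\psi_{B_n}(w,\sigma)=\lambda_{\min}(\sigma(\Delta(w)))\geq \lambda_{\min}(\sigma(\Delta(w_T)))$. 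Moreover the irrep $([n-1,1],\emptyset)$ factors through $B_n\twoheadrightarrow S_n$, so $N_n$ acts trivially there and $\psi_{B_n}(w,([n-1,1],\emptyset))=\psi_{S_n}(w_T,[n-1,1])$. It therefore suffices to prove
$$\lambda_{\min}\bigl(\sigma(\Delta(w_T))\bigr)\;\geq\;\psi_{S_n}(w_T,[n-1,1]).$$

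For the second step I would decompose $\sigma|_{S_n}$ into $S_n$-irreducibles; the spectrum of $\sigma(\Delta(w_T))$ is then the union of the spectra of $\tau(\Delta(w_T))$ over the constituents $\tau$. The Caputo--Liggett--Richthammer theorem \cite{CLR} gives $\lambda_{\min}(\tau(\Delta(w_T)))\geq\psi_{S_n}(w_T,[n-1,1])$ for every nontrivial $\tau$, while the trivial $S_n$-rep $[n]$ contributes the eigenvalue $0$. The desired bound thus reduces to showing that the trivial $S_n$-module does \emph{not} occur in $\sigma|_{S_n}$.

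For the third and final step, Frobenius reciprocity identifies the multiplicity of $[n]$ in $\sigma|_{S_n}$ with the multiplicity of $\sigma$ in $\Ind_{S_n}^{B_n} 1$. The latter is the permutation representation of $B_n$ on $B_n/S_n\cong\{\pm 1\}^n$, and a standard decomposition---split $\mathbb{C}[\{\pm 1\}^n]$ into $N_n$-isotypic components and note that $S_n$ acts on the $2^n$ characters of $N_n$ as the natural permutation action on $2^{[n]}$---identifies it with $\bigoplus_{k=0}^n ([n-k],[k])$. Consequently $[n]$ appears in $\sigma|_{S_n}$ iff $\sigma=([n-k],[k])$ for some $0\leq k\leq n$; the assumption that $\sigma$ is nontrivial excludes $k=0$, and $\sigma\notin\mathcal F_n$ excludes $k\geq 1$. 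The only substantive input is the CLR theorem; the remainder is a structural calculation identifying $\Ind_{S_n}^{B_n} 1$, so no serious obstacle is anticipated.
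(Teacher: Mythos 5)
Your proposal is correct and follows essentially the same route the paper takes: drop the PSD term $\Delta(w_N)$, restrict $w_T$ to $S_n$, invoke Caputo--Liggett--Richthammer on the nontrivial constituents, and use Frobenius reciprocity plus the Fourier decomposition of $\Ind_{S_n}^{B_n}\mathbf{1}\cong\bigoplus_{k}V_n^k$ (the paper's Lemma~\ref{lem:no_trivial}) to rule out a trivial $S_n$-constituent in $\sigma|_{S_n}$. No substantive differences from the paper's argument.
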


The proof of Theorem \ref{thm:stronger_main} is based on the branching properties of the pair $(B_n,S_n)$ and on the proven Aldous conjecture. Interestingly, it does not require the use of a new octopus inequality: The only use of such operator inequalities is done indirectly through the use of the Aldous-Caputo-Ligget-Richthammer Theorem.

Our next result shows that all of the irreps in $\mathcal F_n$ (and respectively, $\fnm$) are required in Theorem \ref{thm:main}:

\begin{thm} \label{thm:required_irreps}
\begin{enumerate}
\item
    Let $\sigma \in \mathcal F_n$. Then there exist $w_T, w_N$ as in Theorem \ref{thm:main} such that $\tau=\sigma$ is the unique minimizer of the expression $\psi_{B_n}(w_T+w_N,\tau)$ among all nontrivial irreps $\tau$ of $B_n$.
\item 
    Let $\sigma \in \fnm$. Then there exist $w_T, w_N^-$ as in Theorem \ref{thm:main} such that $\tau=\sigma$ is the unique minimizer of the expression $\psi_{B_n}(w_T+w_N^-,\tau)$ among all nontrivial irreps $\tau$ of $B_n$.    
    \end{enumerate}
\end{thm}

In particular, Theorem \ref{thm:required_irreps} shows that Cesi's conjecture is false.

The main part in the proof of Theorem \ref{thm:required_irreps} is the case where $\sigma=([n-k],[k])$ for some $1\leq k \leq n-1$. In that case we use Fourier theory on the hypercube to construct $w_N^-$ such that $\psi_G(w_N^-,\sigma) < \psi_G(w_N^-,([n-i],[i]))$ for all $0< i\neq k$, and then show that $w_T$ can be chosen as a certain multiple of the complete graph element $\sum_{1\leq i<j\leq n}(ij)$ to satisfy the required property. 

The rest of the paper is organized as follows: In Section \ref{sec:reps} we recall and prove some properties of irreps of $B_n$. In particular, we give a concrete realization of the irreps $([n-i],[i])$. In Section \ref{sec:proof of main} we analyse the restriction from $B_n$ to $S_n$, and prove Theorems \ref{thm:main} and \ref{thm:stronger_main}. In Section \ref{sec:required irreps} we analyse the restriction from $B_n$ to $N_n$, and prove Theorem \ref{thm:required_irreps}. In the final section, we propose some open questions.
 
\section{Representations of $B_n$} \label{sec:reps}
As in the introduction, we denote by $B_n$ the hyperoctahedral group, realized as $n\times n$ signed permutation matrices, and let $N_n$ be the subgroup of diagonal matrices. Hence, $N_n \cong (C_2)^n$ and $B_n=N_n \rtimes S_n$.
Let us recall some of the representation theory of this group. 

We start by defining a multiplicative character $\nu_n$ of $B_n$: It is given on the $n\times n$ matrices by
\begin{equation} 
\label{eq:nu_k_rep}
\nu_n(A)=\prod_{1\leq i,j \leq n \text{ : } A_{ij}\neq 0}A_{ij}.
\end{equation}

For each partition of $n$, $\sigma \vdash n$, let $V_\sigma$ be a corresponding $S_n$--irrep.

The irreps of $B_n$ correspond to pairs of partitions $(\sigma_1,\sigma_2)$, where $|\sigma_1|+|\sigma_2|=n$. The correspondence can be described as follows:

Let $i=|\sigma_1|$, so $n-i=|\sigma_2|$. Pulling back by the quotient map $B_i \rightarrow B_i/N_i \cong S_i$, we consider $V_{\sigma_1}$ as a representation of $B_i$. Similarly, we consider $V_{\sigma_2}$ as a representation of $B_{n-i}$. Then (Cf. \cite{Cesi}, (3.1), and \cite{Serre}, Section 8.2), the $B_n$--irrep corresponding to the pair $(\sigma_1,\sigma_2)$ is
\begin{equation}
\label{eq:Bn_irrep_def}    
 V_{(\sigma_1,\sigma_2)}:=\Ind^{B_n}_{B_i \times B_{n-i}} ((V_{\sigma_1} \boxtimes (V_{\sigma_2} \otimes \nu_{n-i})))
\end{equation}

The following Lemma follows directly from the definitions:

\begin{lemma} \label{lem:(sigma,empty)}
    Let $\sigma \vdash n$, and consider the representation $V=V_{(\sigma,\emptyset)}$. Then $V$ is equal to the pullback of the $S_n$--irrep $V_\sigma$ via the quotient map from $B_n$ to $S_n$. In particular, we have
    $$ \Res^{B_n}_{S_n}V \cong V_\sigma.$$
\end{lemma}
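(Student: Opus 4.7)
The plan is to unpack the definition \eqref{eq:Bn_irrep_def} in the degenerate case $\sigma_2 = \emptyset$ and track how the $B_n$--action is built. First I would observe that when $\sigma_2 = \emptyset$, we have $i = |\sigma_1| = n$ and $n-i = 0$, so the relevant subgroups are $B_i = B_n$ and $B_{n-i} = B_0$, the trivial group. Consequently, $B_i \times B_{n-i} = B_n$, and the induction functor $\Ind^{B_n}_{B_i \times B_{n-i}}$ is the identity.

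Next, I would check that the inner building block reduces to $V_\sigma$ as a $B_n$--module. The factor $V_{\sigma_2} \otimes U$ is just the trivial one--dimensional representation of $B_0$, so the external tensor product $V_{\sigma_1} \boxtimes (V_{\sigma_2}\otimes U)$ is isomorphic to $V_{\sigma_1} = V_\sigma$ as a $B_n$--module. By the construction in \eqref{eq:Bn_irrep_def}, $V_\sigma$ here carries the $B_n$--action obtained by pulling back the $S_n$--irrep $V_\sigma$ along the quotient map $B_n \twoheadrightarrow B_n/N_n \cong S_n$. Putting the two steps together yields $V_{(\sigma,\emptyset)} \cong V_\sigma$ as $B_n$--representations, where the right--hand side carries the pulled--back action; this is the first assertion.

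For the restriction statement, I would use the fact that the composition $S_n \hookrightarrow B_n \twoheadrightarrow B_n/N_n \cong S_n$ is the identity on $S_n$. Therefore, restricting the pullback representation along the inclusion $S_n \hookrightarrow B_n$ returns the original $S_n$--representation $V_\sigma$, giving $\Res^{B_n}_{S_n} V_{(\sigma,\emptyset)} \cong V_\sigma$.

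There is no real obstacle here: the lemma is essentially a bookkeeping check that the degenerate case of \eqref{eq:Bn_irrep_def} behaves as expected. The only minor point to be careful about is the interpretation of $B_0$ and the trivial induction/external tensor product, which I would address explicitly as above.
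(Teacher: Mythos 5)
Your proof is correct and is exactly the direct unpacking of definition \eqref{eq:Bn_irrep_def} that the paper has in mind when it says the lemma \emph{``follows directly from the definitions''} (the paper gives no written proof). You correctly handle the degenerate $B_0$ factor, the trivial induction, and the fact that $S_n \hookrightarrow B_n \twoheadrightarrow B_n/N_n$ is the identity.
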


The following family of $B_n$--irreps is central in our study:
    
\begin{defi}\label{def:Vni}
    For any $0\leq i \leq n$, let $V_n^i$ be the following representation of $B_n$: As a vector space, $V_n^i$ is the space of formal linear combinations of sets of size $i$ inside $\{1,\dt,n\}$,
    $$ V_n^i = \{\sum_{A \subseteq \{1,\dt,n\},\, |A|=i} \alpha_A A \st \forall A, \alpha_A \in \R\}$$ 
    The action of $B_n$ is defined by:
    \begin{enumerate}
        \item  For any $\pi \in S_n$ and $A\subseteq \{1,\dt,n\}$ of size $i$, $\pi\cdot A= \pi(A)=\{\pi(j) \st j \in A\}$.
        \item For any $A,B \subseteq \{1,\dt,n\}$, 
        \begin{equation}\label{eq:sA_action}
        s_A \cdot B = (-1)^{|A\cap B|} B.
        \end{equation}
    \end{enumerate}
    \end{defi}

\begin{lemma}\label{lem:[n-i],[i]}
    For any $0\leq i \leq n$, the action of $B_n$ on $V_n^i$ in Definition \ref{def:Vni} is well defined. It makes $V_n^i$ an irrep of $B_n$, corresponding to the pair $([n-i],[i])$.
\end{lemma}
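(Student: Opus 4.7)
The plan is to first check that the two prescriptions in Definition \ref{def:Vni} combine into a genuine action of $B_n = N_n \rtimes S_n$, and then to identify $V_n^i$ with the induced representation \eqref{eq:Bn_irrep_def} for the pair $([n-i],[i])$.

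For well-definedness, the $S_n$-action on subsets is clear. For $N_n$, multiplicativity amounts to the identity $(-1)^{|A\cap B|}(-1)^{|C\cap B|} = (-1)^{|(A\triangle C)\cap B|}$, which holds because $(A\triangle C)\cap B = (A\cap B)\triangle(C\cap B)$, so the two sides differ by an even quantity $2|A\cap C\cap B|$. Finally, the semidirect-product compatibility $\pi s_A \pi^{-1} = s_{\pi(A)}$ is preserved by the action because $|A\cap \pi^{-1}(B)| = |\pi(A)\cap B|$, and a quick computation gives $(\pi s_A \pi^{-1})\cdot B = (-1)^{|\pi(A)\cap B|}B = s_{\pi(A)}\cdot B$.

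Next, I realize $V_n^i$ as an induced representation. The group $B_n$ acts transitively on the set of size-$i$ subsets of $\{1,\dt,n\}$ through the quotient $B_n\to S_n$, and the action on $V_n^i$ permutes the basis vectors up to signs. I fix the base vector $B_0=\{n-i+1,\dt,n\}$; its stabilizer inside $B_n$ is $H=B_{n-i}\times B_i$, where $B_{n-i}$ acts on $\{1,\dt,n-i\}$ and $B_i$ on $\{n-i+1,\dt,n\}$. Computing how $H$ acts on the line $\R\cdot B_0$ gives a one-dimensional character $\chi$: for $\pi_1\in S_{n-i}$ and $A_1\subseteq\{1,\dt,n-i\}$ we have $A_1\cap B_0=\emptyset$, so $\chi$ is trivial on $B_{n-i}$; for $\pi_2\in S_i$ one has $\pi_2\cdot B_0=B_0$, and for $A_2\subseteq\{n-i+1,\dt,n\}$ we have $|A_2\cap B_0|=|A_2|$, so on $B_i$ the character is $\chi(s_{A_2}\pi_2)=(-1)^{|A_2|}$. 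Because $V_n^i$ has a basis in bijection with $B_n/H$ and the action matches that of the induced module, we conclude $V_n^i\cong \Ind_H^{B_n}\chi$.

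Finally, I compare $\chi$ with the inducing data in \eqref{eq:Bn_irrep_def} for the pair $(\sigma_1,\sigma_2)=([n-i],[i])$. There, $V_{[n-i]}$ is the trivial $S_{n-i}$--representation pulled back to $B_{n-i}$, so it agrees with the trivial component of $\chi$ on the first factor. On the second factor, $V_{[i]}\otimes U$ is trivial on $S_i$ and evaluates to $U(s_{A_2})=(-1)^{|A_2|}$ on $s_{A_2}\in N_i$, which coincides with $\chi$ on $B_i$. Hence
\begin{equation*}
 V_n^i \;\cong\; \Ind_{B_{n-i}\times B_i}^{B_n}\bigl(V_{[n-i]}\boxtimes (V_{[i]}\otimes U)\bigr) \;=\; V_{([n-i],[i])},
\end{equation*}
which is an irrep of $B_n$ by the parametrization recalled in the introduction.

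The only step that requires any real care is the bookkeeping for the character $\chi$, in particular matching conventions so that the twist by $U$ falls on the factor $B_i$ (associated to $\sigma_2=[i]$) rather than on $B_{n-i}$; everything else is a routine verification.
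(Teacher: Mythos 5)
Your proof is correct and takes essentially the same route as the paper's, just with the logical direction reversed: the paper starts from the induced module $V_{([n-i],[i])}$, picks coset representatives $\vartheta_A$ indexed by size-$i$ subsets, and computes the action to recover the formulas of Definition~\ref{def:Vni} (well-definedness is then automatic), whereas you start from Definition~\ref{def:Vni}, verify directly that the prescriptions respect $s_As_C=s_{A\triangle C}$ and $\pi s_A\pi^{-1}=s_{\pi(A)}$, and then recognize $V_n^i$ as the monomial representation induced from the character $\chi$ of the stabilizer $H=B_{n-i}\times B_i$ of the base line, finally matching $\chi$ with $1\boxtimes(1\otimes U)$. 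Both hinge on the same identification $B_n/(B_{n-i}\times B_i)\cong S_n/(S_{n-i}\times S_i)\leftrightarrow\{\text{size-}i\text{ subsets}\}$ and the same computation of how $N_n$ and $S_n$ act on the coset basis; your version has the small advantage of making the well-definedness claim explicit rather than implicit.
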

\begin{proof}
    Consider the trivial representations $V_{[n-i]}$, $V_{[i]}$ of $S_{n-i}$ and $S_i$, respectively. Pulling back through the quotient maps $B_{n-i}\rightarrow S_{n-i}$ and $B_i\rightarrow S_i$, we view them as the trivial representations of $B_{n-i}$ and $B_i$, respectively. 
    As $V_{[n-i]}$, $V_{[i]}$ and $\nu_i$ are one dimensional, so is the $B_{n-i}\times B_i$--irrep $V_{[n-i]}\boxtimes (V_{[i]}\otimes \nu_i)$, and the induction 
    $$V_{([n-i],[i])}=\Ind_{B_{n-i}\times B_i}^{B_n} (V_{[n-i]}\boxtimes (V_{[i]} \otimes \nu_i)) \cong   \R[B_n] \underset{\R[B_{n-i}\times B_i]}{\otimes} (1 \boxtimes \nu_i)$$
    has a basis corresponding to the cosets of $B_{n-i}\times B_i$ in $B_n$. As $B_{n-i}\times B_i=(N_{n-i}S_{n-i})\times (N_iS_i)=N_n(S_{n-i}\times S_i)$,    we have an isomorphism of sets
    $$ B_n/(B_{n-i}\times B_i) = (N_nS_n)/(N_n (S_{n-i}\times S_i)) \cong S_n/(S_{n-i} \times S_i)$$
    
    These cosets correspond to the subsets of $\{1,\dots,n\}$ with cardinality $i$. For each such subset $A$, let us choose a permutation $\vartheta_A\in S_n$ such that $\vartheta_A(\{n-i+1,\dt,n\})=A$. Then these permutations are representatives of the cosets $B_n/(B_{n-i}\times B_i)$, and therefore form a basis to the induced representation. 

    For any $B \subseteq \{1,\dt n\}$ and $A\subset \{1,\dt,n\}$ such that $|A|=i$, we have
    \begin{eqnarray*}
        s_B \cdot \vartheta_A &=& \vartheta_A (\vartheta_A^{-1} s_B \vartheta_A) = \vartheta_A s_{\vartheta_A^{-1}(B)}\\
        &=& \vartheta_A \cdot (1\boxtimes \nu_i)( s_{\vartheta_A^{-1}(B)} ) \\
        &=& \vartheta_A \cdot (-1)^{|\vartheta_A^{-1}(B) \cap \{n-i+1,\dt, n\}|} \\
        &=& \vartheta_A \cdot (-1)^{|B \cap \vartheta_A(\{i+1,\dt,n\})|} 
        =\vartheta_A \cdot (-1)^{|A\cap B|}
    \end{eqnarray*}

    For any $\pi \in S_n$ and $A\subset \{1,\dt,n\}$ such that $|A|=i$, we have 
    $$ (\pi \vartheta_A) (\{n-i+1,\dt,n\})=\pi(A)= \vartheta_{\pi(A)}(\{n-i+1,\dt,n\})$$  
    Hence $\pi' :={\vartheta_{\pi(A)}}^{-1} \pi \vartheta_A  \in S_{n-i} \times S_i$, so in the induced representation $V_{([n-i],[i])}$ we have
    $$ \pi \vartheta_A = \vartheta_{\pi(A)} \pi' = \vartheta_{\pi(A)} (1\boxtimes \nu_i)(\pi')=\vartheta_{\pi(A)}$$
 
    Therefore, the action of the permutations and the diagonal elements $s_A$ on the coset representatives $\vartheta_A$ is compatible with their action on the sets of cardinality $i$ in definition \ref{def:Vni}. This completes the proof.
\end{proof}

\section{Proof of Theorems \ref{thm:main} and \ref{thm:stronger_main}} \label{sec:proof of main}
Our proof of Theorem \ref{thm:stronger_main} relies on the following lemma:
\begin{lemma}\label{lem:no_trivial}
    Let $\sigma$ be an irrep of $B_n$. Then the restriction $\Res^{B_n}_{S_n} \sigma$ contains a copy of the trivial irrep of $S_n$ if and only if $\sigma$ corresponds to the pair $([n-i],[i])$ for some $0\leq i \leq n$.
\end{lemma}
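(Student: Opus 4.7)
The plan is to apply Frobenius reciprocity: the trivial representation of $S_n$ appears in $\Res^{B_n}_{S_n}\sigma$ if and only if $\sigma$ is an irreducible constituent of $\Ind^{B_n}_{S_n}(\mathbf{1}_{S_n})$. So the task reduces to decomposing this induced $B_n$-module into irreducibles, and the claim becomes the identification
$$ \Ind^{B_n}_{S_n}(\mathbf{1}_{S_n}) \cong \bigoplus_{i=0}^n V_{([n-i],[i])}. $$

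To prove this identification, I would realize the induced module concretely on the coset space $B_n/S_n$. Since $B_n = N_n \rtimes S_n$, each coset has a unique representative in $N_n$, so as a vector space we may identify $\Ind^{B_n}_{S_n}(\mathbf{1}_{S_n})$ with $\R[N_n]$. A short calculation using the semidirect product structure shows that $N_n$ acts on $\R[N_n]$ by left multiplication (the regular representation of $N_n$), while $S_n$ acts by its natural permutation action on the coordinates of $N_n\cong (C_2)^n$. I would then Fourier-decompose $\R[N_n]$: for each $C \subseteq \{1,\dt,n\}$, the character $\chi_C(s_B) = (-1)^{|B\cap C|}$ gives a primitive idempotent $e_C=\frac{1}{2^n}\sum_B (-1)^{|B\cap C|}s_B$, and a direct change-of-variable shows that $s_A\cdot e_C=(-1)^{|A\cap C|}e_C$ and $\pi\cdot e_C=e_{\pi(C)}$ for all $A,C\subseteq\{1,\dt,n\}$ and all $\pi\in S_n$. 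Hence the span $W_i=\langle e_C:|C|=i\rangle$ is a $B_n$-subrepresentation whose action matches, under $e_C\leftrightarrow C$, the action defining $V_n^i$ in Definition \ref{def:Vni}. Lemma \ref{lem:[n-i],[i]} then gives $W_i\cong V_{([n-i],[i])}$, and the dimension check $\sum_{i=0}^n\binom{n}{i}=2^n=[B_n:S_n]$ confirms the displayed decomposition.

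Combining this with Frobenius reciprocity yields the lemma. I do not expect any serious obstacle: the only step that needs genuine care is bookkeeping the semidirect product to verify that $\pi\cdot e_C=e_{\pi(C)}$, so that $W_i$ really does match Definition \ref{def:Vni} on the nose and Lemma \ref{lem:[n-i],[i]} can be quoted. As a sanity check, one could bypass Fourier analysis altogether: a Mackey-style computation of $\Res^{B_n}_{S_n} V_{(\sigma_1,\sigma_2)}$ produces the product $V_{\sigma_1}\cdot V_{\sigma_2}$ of Specht modules induced from $S_{|\sigma_1|}\times S_{|\sigma_2|}$, and Pieri's rule then shows that $V_{[n]}$ occurs in this product exactly when both $\sigma_1$ and $\sigma_2$ are single-row partitions, recovering the same characterization.
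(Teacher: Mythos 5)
Your proposal is correct and takes essentially the same route as the paper: both pass through Frobenius reciprocity to reduce to decomposing $\Ind_{S_n}^{B_n}\mathbf 1 \cong \R[B_n/S_n] \cong \R[N_n]$, and both use the Fourier transform on $N_n\cong(C_2)^n$ to identify the irreducible constituents with the modules $V_n^i$. The only cosmetic difference is that you package the Fourier decomposition via the primitive idempotents $e_C$ and isotypic subspaces $W_i$, whereas the paper writes down the single explicit $B_n$-equivariant isomorphism $\phi(s_CS_n)=\sum_D(-1)^{|C\cap D|}D$ onto $\bigoplus_i V_n^i$; these are the same computation.
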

\begin{proof}
    By Frobenius reciprocity, the $B_n$--irreps whose restriction to $S_n$ contains the trivial $S_n$--irrep are exactly those which appear in the decomposition of $\Ind_{S_n}^{B_n}1_{S_n}$ to $B_n$--irreps. The latter representation is the coset representation $\R[B_n/S_n]$. As $B_n= N_n S_n$, the elements of $N_n$ form a complete set of representatives of $B_n/S_n$. We shall prove that the irreducible components of the coset representation are exactly the irreps $([n-i],[i])$ for $0\leq i \leq n$ by constructing an explicit isomorphism

$$ \phi: \R[B_n / S_n] \xrightarrow{\sim} \bigoplus_{0\leq i \leq n} V_n^i
$$
    Our map will be defined by 
    \begin{equation*}
        \phi(s_C S_n)=\sum_{D\subseteq \{1,\dots,n\}}(-1)^{|C\cap D|} D \text{ for all }C\subseteq\{1,\dots,n\}.
    \end{equation*}

    As a map between vector spaces, $\phi$ is nothing but the Fourier transform in $N_n$, therefore it is an isomorphism. Let us verify that this is also a map of $B_n$--representations. We shall denote by $\oplus$ the symmetric difference operation between sets. 
    For any $A\subseteq\{1,\dots,n\}$, we have
    \begin{eqnarray*}
        \phi(s_A \cdot (s_C S_n)) &=& \phi(s_{A \oplus C}S_n) = \sum_{D\subseteq \{1,\dots,n\}}(-1)^{|(A\oplus C)\cap D|}s_{D} \\ 
        &=& \sum_{D\subseteq \{1,\dots,n\}}(-1)^{|A\cap D|}(-1)^{|C\cap D|}s_{D} \\ 
        &=& s_A \cdot ( \sum_{D\subseteq \{1,\dots,n\}}(-1)^{|C\cap D|}s_{D} ) \\
        &=& s_A \phi (s_C S_n)
    \end{eqnarray*}
    Also, for $\pi\in S_n$, we get
    \begin{eqnarray*}
        \phi(\pi \cdot (s_C S_n)) &=& \phi(\pi s_C \pi^{-1} \cdot (\pi S_n)) =\phi( s_{\pi(C)}S_n)\\ 
        &=&\sum_{D\subseteq \{1,\dots,n\}}(-1)^{|\pi(C)\cap D|} D\\
        &=&\sum_{D\subseteq \{1,\dots,n\}}(-1)^{|C\cap \pi^{-1}(D)|} D\\
        &=&\sum_{D\subseteq \{1,\dots,n\}}(-1)^{|C\cap D|} \pi(D)\\
         &=&\pi\cdot\phi(s_C S_n). 
    \end{eqnarray*}
\end{proof}

We now recall and prove Theorems \ref{thm:stronger_main} and \ref{thm:main}:

\begin{customthm}{\ref{thm:stronger_main}}
    Let $\sigma$ be a nontrivial irrep of $B_n$ which does \textbf{not} correspond to any element of $\mathcal F_n$, and let $w=w_T+w_N$, where $w_T$ and $w_N$ be as in Theorem \ref{thm:main}. Then we have
    $$ \psi_{B_n}(w,\sigma) \geq \psi_{B_n}(w, ([n-1,1],\emptyset)). $$
\end{customthm}
\begin{proof}
    As $\sigma$ is nontrivial and is not in $\mathcal F_n$, it is not of the form $([n-i],[i])$ for any $0\leq i \leq n$. By Lemma \ref{lem:no_trivial}, the restriction of $\sigma$ to $S_n$ contains no copy of the trivial irrep. Therefore there exist nontrivial $S_n$--irreps $\mu_1,\dt,\mu_k$ such that $\Res^{B_n}_{S_n} \sigma \cong \oplus_{i=1}^k \mu_i$.
    Hence:

    \begin{eqnarray*}
\psi_{B_n}(w, \sigma) & \stackrel{\left(1\right)}{\ge} & \psi_{B_n}(w_T, \sigma) \\
                      & \stackrel{\left(2\right)}{=} & \psi_{S_n}(w_T, \Res^{B_n}_{S_n} \sigma) \\
                      & \stackrel{\left(3\right)}{=} & \min_{1\leq i\leq k}\psi_{S_n}(w_T, \mu_i) \\
                      & \stackrel{\left(4\right)}{\ge} & \psi_{S_n}(w_T, [n-1,1]) \\
                      & \stackrel{\left(5\right)}{=} & \psi_{B_n}(w_T, ([n-1,1],\emptyset) ) \\
                      & \stackrel{\left(6\right)}{=} & \psi_{B_n}(w, ([n-1,1],\emptyset) )
\end{eqnarray*}
where (1) follows since $\sigma(\Delta(w_N))$ is a nonnegative operator; (2) follows since $w_T$ is supported on $S_n$, and $\Res^{B_n}_{S_n}\sigma$ does not contain copies of the trivial $S_n$--irrep; (3) is immediate; (4) follows from the Aldous-Caputo-Liggett-Richthammer theorem; (5) is the same as (2) where $\sigma=([n-1,1],\emptyset)$ (and also using Lemma \ref{lem:(sigma,empty)}); and (6) holds as the action matrix of $\Delta(w_N)$ on the representation $([n-1,1],\emptyset)$ is equal to $0$.
\end{proof}

\begin{customthm}{\ref{thm:main}}
    Let $w_T=\sum_{1\leq i<j\leq n} a_{ij}(ij)$ and $w_N=\sum_{A \subseteq \{1,\dt,n\}}\alpha_As_A$ with any nonnegative weights $(a_{ij})$ and $(\alpha_A)$. Let $w_N^-=\sum_{A \subseteq \{1, \dt, n \} \, : \, |A| \text { is odd} } \alpha_A s_A$.
    
    Then we have:
    \begin{enumerate} 
    \item $ \psi_{B_n}(w_T+w_N)=\min \{ \psi_{B_n}(w_T+w_N, \sigma) \st \sigma \in \mathcal F_n\}.$
    \item $ \psi_{B_n}(w_T + w_N^-)=\min \{ \psi_{B_n}(w_T+w_N^-, \sigma) \st \sigma \in \fnm\}.$
    \end{enumerate}
\end{customthm}
    
\begin{proof}
    The first part follows directly from Theorem \ref{thm:stronger_main}. For the second part, let $w=w_T+w_N^-$. Let us assume, without loss of generality, that $\alpha_A=0$ for any $A$ of even size, so $w_N=w_N^-$.
    
    By the first part, we have $ \psi_{B_n}(w)=\min \{ \psi_{B_n}(w, \sigma) \st \sigma \in \mathcal F_n\}.$ Let $\sigma_0=(\emptyset,[n])$. Since $ \fnm = \mathcal F_n \setminus \{\sigma_0\}$, it suffices to show that there exists $\tau \in \fnm$ such that $\psi_{B_n}(w,\tau) \leq \psi_{B_n}(w, \sigma_0)$. In fact, we shall now prove that this holds for any $\tau=([n-k],[k])$ with $1 \leq k \leq n-1$.

    By \eqref{eq:Bn_irrep_def}, $\sigma_0$ is the one-dimensional representation given by $\nu_n$ (see also \eqref{eq:nu_k_rep}). Since $\nu_n(s_A)=-1$ for any $A$ of odd size, and $\nu_n(\pi)=1$ for any $\pi \in S_n$, we have
    \begin{equation} \label{eq:psi of sigma0}
     \psi_{B_n}(w,\sigma_0)= \nu_n(\Delta(w))=\sum_{A\subseteq \{1,\dt, n\}}\alpha_A(1-\nu_n(s_A))=2\sum_{A\subseteq \{1,\dt, n\}} \alpha_A
    \end{equation}    
    By Lemma $\ref{lem:[n-i],[i]}$, $\tau$ is isomorphic to the irrep $V_n^k$. Consider the standard inner product on $V_n^k$:

    \begin{equation}
    \label{eq:inner product}
    \langle \sum \alpha_S S, \sum \beta_S S \rangle = \sum \alpha_s \beta_s
    \end{equation}

    As $w$ is symmetric, so is $\Delta(w)$, and therefore the matrix representing its action on the basis $\{S \subseteq B \st |S|=k\}$ is symmetric. Therefore, the minimum eigenvalue of the action of $\Delta(w)$ on $V_n^k$ is given by the minimum Rayleigh quotient:
    \begin{equation}
    \label{eq:rayleigh}
    \psi_{B_n}(w,V_n^k)=\min_{\xi \in V_n^k \setminus \{0\}} \frac{\langle \Delta(w) \xi, \xi \rangle }{\langle \xi, \xi \rangle}
    \end{equation}
 Let us consider the element
    $$ v=\sum_{S \subseteq \{1,\dt,n\}, |S|=k}S \in V_n^k$$
As $v$ is $S_n$-invariant, we have $\Delta(w_T)v=0$. For any $A \subseteq \{1,\dt,n\}$ we have by \eqref{eq:sA_action}, $$s_Av=\sum_{S \subseteq \{1,\dt,n\}, |S|=k}(-1)^{|A\cap S|}S$$
and therefore,
\begin{equation*}
 \langle s_Av,v \rangle= \sum_{S \subseteq  \{1,\dt,n\}, |S|=k}(-1)^{|A\cap S|}\geq \sum_{S \subseteq  \{1,\dt,n\}, |S|=k}(-1)=-\binom nk.
 \end{equation*}
We also have 
\begin{equation} \label{eq:v,v}
    \langle v,v\rangle = \binom nk
\end{equation}
Therefore:
\begin{eqnarray*}
\langle \Delta(w)v, v\rangle &=& 
\langle \Delta(w_T)v,v\rangle + \langle \Delta(w_N^-)v,v\rangle \\
&=& \langle \Delta(w_N^-)v,v\rangle 
= \sum_{A\subseteq \{1,\dt, n\}} \alpha_A \langle v-s_A v,v \rangle \\
&=&  \sum_{A\subseteq \{1,\dt, n\}} \alpha_A (\langle v,v \rangle -\langle s_A v,v \rangle) \leq 2 \binom nk \sum_A \alpha_A
\end{eqnarray*}
By \eqref{eq:psi of sigma0}, \eqref{eq:rayleigh} and \eqref{eq:v,v}, we conclude:
$$ \psi_{B_n}(w,([n-k],[k])) \leq
\frac{\langle \Delta(w)v,v \rangle}{\langle v,v \rangle}
\leq \frac{2 \binom nk \sum_A \alpha_A}{\binom nk} = \psi_{B_n}(w, \sigma_0).
$$

\end{proof}

\section{Proof of Theorem \ref{thm:required_irreps}} \label{sec:required irreps}

We now turn to showing by explicit construction that for any $n$, all the irreps in $\mathcal F_n$ (resp. $\fnm$) are needed in order to account for the spectral gap of the general group ring elements of the form $w_T+w_N$ (resp. $w_T+w_N^-$). 

We start with another lemma on the branching of $B_n$--irreps.

\begin{lemma} \label{lem:Nn-Branching}
    Let $V=V_{(\sigma_1,\sigma_2)}$ be a $B_n$--irrep. Then $\Res^{B_n}_{N_n}V$ contains a copy of the trivial $N_n$--irrep if and only if $\sigma_2=\emptyset$.
\end{lemma}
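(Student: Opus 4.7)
The plan is to proceed by Frobenius reciprocity applied twice. The number of trivial $N_n$--summands in $\Res^{B_n}_{N_n} V_{(\sigma_1,\sigma_2)}$ equals the inner product
$$\langle \Res^{B_n}_{N_n} V_{(\sigma_1,\sigma_2)},\, 1_{N_n} \rangle_{N_n} = \langle V_{(\sigma_1,\sigma_2)},\, \Ind^{B_n}_{N_n} 1_{N_n}\rangle_{B_n},$$
so it suffices to decompose the induced representation $\Ind^{B_n}_{N_n} 1_{N_n}$ into $B_n$--irreps and check when $V_{(\sigma_1,\sigma_2)}$ appears.

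The key observation is that, since $N_n$ is a normal subgroup of $B_n$ with quotient $B_n/N_n \cong S_n$, the representation $\Ind^{B_n}_{N_n} 1_{N_n}$ is isomorphic to the pullback of the left regular representation of $S_n$ along the quotient map $B_n \twoheadrightarrow S_n$: this holds because the $B_n$--set $B_n/N_n$ can be identified with $S_n$ (as $B_n = N_nS_n$ with $N_n \cap S_n = \{1\}$), and the left $B_n$--action on this coset space factors through the quotient. Decomposing the regular representation of $S_n$ as $\bigoplus_{\sigma \vdash n}(\dim V_\sigma) V_\sigma$ and applying Lemma \ref{lem:(sigma,empty)}, which identifies each pulled--back $S_n$--irrep $V_\sigma$ with $V_{(\sigma,\emptyset)}$, yields
$$\Ind^{B_n}_{N_n} 1_{N_n} \;\cong\; \bigoplus_{\sigma \vdash n} (\dim V_\sigma)\, V_{(\sigma,\emptyset)}.$$

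Consequently, $V_{(\sigma_1,\sigma_2)}$ appears as a constituent of this induced representation if and only if $\sigma_2 = \emptyset$, which by the Frobenius reciprocity step above is equivalent to $\Res^{B_n}_{N_n} V_{(\sigma_1,\sigma_2)}$ containing a trivial $N_n$--summand. I do not anticipate a real obstacle in this proof; the only step requiring a little care is justifying the identification of $\Ind^{B_n}_{N_n} 1_{N_n}$ with the pullback of the $S_n$--regular representation, and this rests on the splitting $B_n = N_n \rtimes S_n$ together with the normality of $N_n$.
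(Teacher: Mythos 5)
Your proposal is correct and follows essentially the same route as the paper's own proof: reduce via Frobenius reciprocity to decomposing $\Ind^{B_n}_{N_n} 1_{N_n}$, identify this coset representation with the pullback of the regular representation of $S_n$ along $B_n \twoheadrightarrow S_n$, and apply Lemma \ref{lem:(sigma,empty)} to read off that the constituents are precisely the $V_{(\sigma,\emptyset)}$.
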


\begin{proof}
    We use the same reasoning as in the proof of Lemma \ref{lem:no_trivial}. By Frobenius reciprocity, the $B_n$--irreps whose restriction to $N_n$ contains the trivial $N_n$--irrep are exactly those which appear in the decomposition of $\Ind_{N_n}^{B_n}1_{N_n}$ to $B_n$--irreps. The latter representation is the coset representation $\R[B_n/N_n]$. Since $B_n/N_n \cong S_n$, this representation is exactly the pullback of the regular representation of $S_n$ to $B_n$. By Lemma \ref{lem:(sigma,empty)}, its irreducible components are exactly the representations $V_{(\sigma_1,\sigma_2)}$ for which $\sigma_2 = \emptyset$.
\end{proof}

We shall also need the following lemma:

\begin{lemma} \label{lem:complete_graph_ev}
    Let $w=\sum_{1\leq i < j \leq n}(ij)$. Then $\psi_{S_n}(w,[n-1,1])=n$.
\end{lemma}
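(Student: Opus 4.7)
The plan is to exploit the observation that $w$ is the sum over a single conjugacy class of $S_n$, namely the class of transpositions, so $w$ lies in the center of the group ring $\R[S_n]$. By Schur's lemma, for any irreducible representation $\sigma$ of $S_n$, the operator $\sigma(w)$ is a scalar multiple of the identity. Since $\Delta(w)=\binom{n}{2}\cdot 1_{S_n}-w$, the operator $\sigma(\Delta(w))$ is also scalar, and so the set of eigenvalues of $\sigma(\Delta(w))$ on the standard representation $[n-1,1]$ consists of a single value, which must be $\psi_{S_n}(w,[n-1,1])$ (the representation is nontrivial, so there is no trivial eigenvalue to discard).

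To identify this scalar, I would invoke the standard central-character formula: $\sigma(w)$ acts as $\frac{|T|\,\chi_\sigma(\tau)}{\dim\sigma}\,I$ where $T$ denotes the set of transpositions and $\tau$ any transposition. Combined with the formula for $\Delta(w)$ above, the unique eigenvalue of $\sigma(\Delta(w))$ equals $|T|\bigl(1-\chi_\sigma(\tau)/\dim\sigma\bigr)$.

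It then remains to specialize to $\sigma=[n-1,1]$, realized as the subspace $\{x\in\R^n : \sum x_i=0\}$ with $S_n$ acting by permutation of coordinates. The character of a transposition on the permutation representation of $\R^n$ is its number of fixed points, namely $n-2$; subtracting the contribution of the trivial subrepresentation gives $\chi_{[n-1,1]}(\tau)=n-3$. Plugging this into the formula yields $\binom{n}{2}-\binom{n}{2}(n-3)/(n-1)=n$, as required. There is no real obstacle here: once centrality is noted, the argument reduces to a routine character computation.
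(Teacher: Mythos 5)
Your proof is correct and uses essentially the same approach as the paper: note that $w$ is central so $\Delta(w)$ acts as a scalar on $[n-1,1]$, then identify that scalar via the character (trace) of a transposition on the $n$-dimensional permutation representation. The paper phrases the computation as ``trace divided by dimension'' rather than invoking the central-character formula by name, but the arithmetic is identical.
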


\begin{proof}
    As $w$ is a central element in $S_n$, it acts as a scalar on $[n-1,1]$, and so does $\Delta(w)$. The scalar is equal to the trace of $\Delta(W)$ on $[n-1,1]$, divided by the dimension of $[n-1,1]$. The trace of $\Delta(w)$ on $[n-1,1]$ is the same as its trace on the standard representation $[n-1,1] \oplus [1]$. A direct matrix calculation shows that the trace of $\Delta((ij))$ on the standard representation is $2$. We therefore have
    $$ \psi_{S_n}(w,[n-1,1]) = \frac 1{n-1}\binom n2 \cdot 2= n$$
\end{proof}

We now recall and prove:

\begin{customthm}{\ref{thm:required_irreps}}
\begin{enumerate}
\item
    Let $\sigma \in \mathcal F_n$. Then there exist $w_T, w_N$ as in Theorem \ref{thm:main} such that $\tau=\sigma$ is the unique minimizer of the expression $\psi_{B_n}(w_T+w_N,\tau)$ among all nontrivial irreps $\tau$ of $B_n$.
\item 
    Let $\sigma \in \fnm$. Then there exist $w_T, w_N^-$ as in Theorem \ref{thm:main} such that $\tau=\sigma$ is the unique minimizer of the expression $\psi_{B_n}(w_T+w_N^-,\tau)$ among all nontrivial irreps $\tau$ of $B_n$.    
    \end{enumerate}
\end{customthm}

\begin{proof}
Let us start with the second part of the theorem, and with the main case $\sigma=([n-k],[k])$, where $1\leq k \leq n-1$. 
Let $A=\{1,2,\dt,k\}$ and $B=\{1,2,\dt n\}$. Consider the collection
    $$ U = \{ S \subseteq B \st |S| \text{ is odd and } |S \cap A| \text{ is even}\}.$$
    Let $w_N^-=4\sum_{C\in U} s_C$ and $w_T=\frac {2^n}n\sum_{1\leq i < j \leq n}(ij)$. We will show that $w=w_T+w_N^-$ satisfies the desired property, namely, that $\psi_{B_n}(w,\sigma) < \psi_{B_n}(w, \tau)$ for any nontrivial $B_n$--irrep $\tau \ncong \sigma$.

    For any set $C\subseteq B$, let $\chi_C: \mathcal P(B)\rightarrow \mathbb R$ be the character $$\chi_C(D)=(-1)^{|C \cap D|},$$ and let $\rho_C$ be the group ring element $$ \rho_C = \sum_{D \subseteq B} \chi_C(D) s_D \in \R[B_n]$$
    Note that the characteristic function of $U$ is
    \begin{eqnarray*}
     \delta_U &=& \frac 14 (\chi_\emptyset + \chi_A)(\chi_\emptyset - \chi_B) \\
     &=& \frac 14 (\chi_\emptyset + \chi_A - \chi_B - \chi_{B \setminus A})
     \end{eqnarray*}
    We therefore have
    \begin{equation} \label{eq:wN_by_characters}
    w_N^-=4\sum_{D\subseteq B} \delta_U(D)s_D= \rho_\emptyset +\rho_A-\rho_B-\rho_{B\setminus A}
    \end{equation}
    
    Since $|U|=2^{n-2}$, we have
    \begin{equation} \Delta(w_N^-)= 2^n s_\emptyset - w_N^- \label{eq:Delta_wN} \end{equation}
    Let us define, for any element $e=\sum_{D\subseteq B} a_D s_D \in \R[N_n]$ its Fourier transform $\hat{e}$ as the function from $\mathcal P(B)$ to $\R$ given by
    $$ \hat e(S)=\sum_{D\subseteq B} (-1)^{|S\cap D|}a_D $$
    By (\ref{eq:sA_action}), the action of $e$ on $V_n^i$ is diagonal with respect to the basis given by subsets of $B$ of size $i$, and the eigenvalue corresponding to the set $S$ is $\hat e(S)$:
    \begin{equation} \label{eq:action_by_fourier}
        e\cdot S=\hat{e}(S)S
    \end{equation}

    The following Fourier transforms are straightforward:
    \begin{eqnarray}
        \widehat{\rho_C}(S)=\begin{cases}
2^n & \text{if } S=C, \\
0 & \text{otherwise} \end{cases} \label{eq:rho_hat}\\ 
    \widehat{s_\emptyset}(S)=1 \text{ (for all $S$)} \label{eq:empy_hat}
    \end{eqnarray}

    By (\ref{eq:wN_by_characters})-(\ref{eq:empy_hat}), we conclude that the Fourier transform of $\Delta (w_N^-)$ is

    \begin{equation}\label{eq:Fourier_of_Delta_w} \widehat{\Delta(w_N^-)}(S)= \begin{cases}
0 & \text{if } S=A \text{ or } S=\emptyset, \\
2\cdot 2^n& \text{if } S=B \text{ or } S=B \setminus A, \\
2^n & \text{otherwise}
\end{cases}
    \end{equation}
     In particular, we get for all $1\leq i \leq n$ with $i\neq k$:
    \begin{equation*}
        \psi_{B_n} (w_N^-, ([n-i],[i]))=\min_{|S|=i}\widehat{\Delta(w_N^-)}(S) \geq 2^n
    \end{equation*}
    Since $\Delta(w_T)$ is a nonnegative operator, we conclude that for such $i$:
    \begin{equation} \label{eq:lower_bound_other_irreps}
        \psi_{B_n} (w, ([n-i],[i])) \geq 2^n
    \end{equation}    
    Next, we will find an upper bound for $\psi_{B_n}(w,([n-k],[k]))$. Let us consider (as in the proof of Theorem \ref{thm:main}) the element
    $$ v=\sum_{S \subseteq B, |S|=k}S \in V_n^k$$

    For any $S\subseteq B$ of size $k$ and any $C \subseteq B$ we have by (\ref{eq:action_by_fourier}) and (\ref{eq:rho_hat}):
    $$ \rho_C\cdot S = \widehat{\rho_C}(S)S = 
    \begin{cases} 2^n S & \text{if } S=C \\ 0 & \text{otherwise} \end{cases}$$
    Therefore,
    \begin{equation}\label{eq:rhoCv} \rho_C \cdot v=\begin{cases} 2^n C & \text{if } |C|=k \\ 0 & \text{otherwise} \end{cases}\end{equation}
    By (\ref{eq:wN_by_characters}),(\ref{eq:Delta_wN}) and (\ref{eq:rhoCv}),
    \begin{equation} \label{eq:Delta_wN_v}
     \Delta(w_N^-) \cdot v = \begin{cases} 2^nv +2^n {(B \setminus A)} -2^n A & \text{if } 2k=n \\
    2^nv-2^nA & \text{otherwise}\end{cases}
    \end{equation}

    As $v$ is $S_n$-invariant, we have $w_T v=v$, hence 
    \begin{equation}\label{eq:Delta_v_0}\Delta(w_T)v=0 \end{equation}
    
    Recall the standard inner product on $V_n^k$ (\eqref{eq:inner product}). By (\ref{eq:Delta_wN_v}) and (\ref{eq:Delta_v_0}),

    \begin{eqnarray*} \langle \Delta(w) v, v \rangle
    &=& \langle (\Delta(w_T) + \Delta(w_N)) v,v \rangle \\
    &=& \langle \Delta(w_N) v  ,v \rangle \\
    &=& \begin{cases} 2^n\langle v + {(B \setminus A)} - A,v \rangle & \text{if } 2k=n \\
    2^n \langle v-A ,v \rangle & \text{otherwise}\end{cases} \\
    &=& \begin{cases} 2^n\binom nk  & \text{if } 2k=n \\
    2^n (\binom nk -1)& \text{otherwise}\end{cases}
    \end{eqnarray*}

    Hence, $\langle \Delta(w) v, v \rangle \leq 2^n \binom nk$. 
    As $\langle v,v\rangle =\binom nk$, we get by the Rayleigh quotient principle (\eqref{eq:rayleigh}):
    \begin{equation} \label{lambda_estimate}
        \psi_{B_n}(w, V_n^k) \leq \frac{\langle \Delta(w) v, v \rangle}{\langle v, v \rangle}\leq 2^n
    \end{equation}
    In fact, we have a strong inequality $\psi_{B_n}(w, V_n^k) < 2^n$, as by (\ref{eq:Delta_wN_v}), $\Delta(w) v = \Delta(w_N^-) v$ is not a multiple of $v$, so $v$ is not an eigenvector of $\Delta(w)$.

    Together with (\ref{eq:lower_bound_other_irreps}), we get that $\psi_{B_n}(w, ([n-k],[k]))$ is the unique minimum among the numbers $\psi_{B_n}(w, ([n-i],[i]))$, for $1\leq i \leq n$.

    Finally, by Lemma \ref{lem:complete_graph_ev}, 
    $$ \psi_{B_n}(w,([n-1,1],\emptyset))\geq \psi_{B_n}(w_T,([n-1,1],\emptyset))= \frac{2^n}n \cdot n = 2^n$$ so
    \begin{equation}\label{eq:std_win}
    \psi_{B_n}(w,([n-k],[k])) < \psi_{B_n}(w,([n-1,1],\emptyset))
    \end{equation}

    We conclude that $\psi_{B_n}(w,\sigma)<\psi_{B_n}(w,\tau)$ for all $\tau\in \mathcal F_n \setminus \{\sigma\}$.
    For any nontrivial $\tau \notin \mathcal F_n$, we have by (\ref{eq:std_win}) and Theorem \ref{thm:stronger_main}:
    $$ \psi_{B_n}(w,\tau) \geq \psi_{B_n}(w,([n-1,1],\emptyset)) > \psi_{B_n}(w, \sigma)$$

    This concludes the case $\sigma=([n-k],[k])$ in the second part of the theorem. 
    
    Let us now assume that $\sigma = ([n-1,1],\emptyset)$. 
    In this case we set $w_N^-=n\sum_{i=1}^n s_{\{i\}}$ and $w_T=\sum_{1\leq i < j \leq n}(ij)$.
    Let us find $\psi(w_N^-,([n-i],[i]))$ for each $i$. By \eqref{eq:sA_action}, if $A\subseteq \{1,\dt,n\}$ is a set of size $i$, viewed as as an element of $V_n^i$, then
$$ \Delta(w_N^-) A = n\sum_{i=1}^n (1- (-1)^{|\{i\}\cap A|})A=2n|A|\cdot A $$
Therefore, all the eigenvalues are equal to $2ni$, so
\begin{equation} \label{eq:psi>=2n}
\psi_{B_n}(w_N^-, V_n^i)=2ni\geq 2n.
\end{equation}

Let us now consider $w=w_N^-+w_T$. The action of $w_N^-$ on $\sigma$ is equal to the zero operator. Hence, by Lemmas \ref{lem:(sigma,empty)} and \ref{lem:complete_graph_ev},
$$ \psi_{B_n}(w, \sigma) = \psi_{B_n}(w_T, \sigma) = n.$$
Let $\tau \neq \sigma$ be any nontrivial irrep of $B_n$. $\tau$ corresponds to a pair of partitions, $(\tau_1,\tau_2)$. We consider two cases:
\begin{itemize}[left = 0pt]
    \item \textbf{Case 1: $\tau_2 \neq \emptyset$}. In that case, by Lemma \ref{lem:Nn-Branching}, 
    $\Res^{B_n}_{N_n} \tau$ does not contain a copy of the trivial $N_n$--irrep. As $N_n$ is abelian, $\Res^{B_n}_{N_n} \tau$ decomposes into nontrivial one-dimensional irreps. By \eqref{eq:sA_action}, these are all included in $\bigoplus_{i=1}^n V_n^i$. Therefore, using \eqref{eq:psi>=2n}, we have
    \begin{eqnarray*}
    \psi_{B_n}(w, \tau) &\geq& 
     \psi_{B_n}(w_N^-,\tau) 
     = \psi_{N_n}(w_N^-,\Res^{B_n}_{N_n} \tau) \\
     &\geq& \min_{1\leq i \leq n} \psi_{N_n}(w_N^-,\Res^{B_n}_{N_n}V_n^i)\\
    &=& \min_{1\leq i \leq n} \psi_{B_n}(w_N^-,V_n^i) 
    \geq  2n > \psi_{B_n}(w,\sigma).
    \end{eqnarray*}

    \item \textbf{Case 2: $\tau_2 = \emptyset$}. In that case, by Lemma \ref{lem:(sigma,empty)}, 
    \begin{eqnarray*}
    \psi_{B_n}(w,\tau)&=&\psi_{B_n}(w_T,\tau)=\psi_{S_n}(w_T, \Res ^{B_n}_{S_n} \tau)\\
    &=& \psi_{S_n}(w_T, \tau_1)
    \end{eqnarray*}
    Similarly, we have
    $$ \psi_{B_n}(w,\sigma) =\psi_{S_n}(w_T, [n-1,1])$$
    Therefore, we need to prove that
    \begin{equation} \label{eq:tau_1>[n-1,1]} \psi_{S_n}(w_T, \tau_1)>\psi_{S_n}(w_T, [n-1,1]) \end{equation} Note that since $\tau \ncong \sigma$ and $\tau$ is nontrivial, $\tau_1 \neq [n]$ and $\tau_1 \neq [n-1,1]$. 

    For any Young diagram $n \vdash \lambda$, let $V_\lambda$ be a corresponding $S_n$--irrep. Let $r(\lambda)$ be the character ratio $r(\lambda)=\chi_{\lambda}((12))/d_\lambda$, where $d_\lambda= \dim V_\lambda$. Since $w_T$ is in the center of $S_n$, all the eigenvalues of its action on $V_\lambda$ are equal to $\binom n2 r(\lambda)$. Therefore \eqref{eq:tau_1>[n-1,1]} is equivalent to 
    
    \begin{equation}\label{eq:r(tau1)<...} r(\tau_1) < r([n-1,1]) \end{equation}
    
    Let $\succeq$ be the dominance partial order on Young diagrams (a definition can be found, e.g., in \cite{DS} before Lemma 9). By Lemma 10 in \cite{DS}, if $n\vdash \lambda, \delta$ and $ \lambda \succeq \delta$ then $r(\lambda)\geq r(\delta)$. The proof of that lemma shows that in the case where $\lambda \neq \delta$ we have a strict inequality $r(\lambda)> r(\delta)$. 

    We have $[n-1,1]\succ \tau_1$, and therefore, (\ref{eq:r(tau1)<...}) holds. This concludes the proof of the second part of the theorem.
\end{itemize}
    As for the first part of the theorem, let $\sigma \in \mathcal F_n$. If $\sigma \in \fnm$ then we are done by the second part of the theorem. Therefore, it is enough to prove the claim for $\sigma=\sigma_0=(\emptyset,[n])$. For that matter, let us define $w=w_T+w_N$ where $w_T=\sum_{1\leq i < j \leq n}(ij)$ and $w_N=\sum_{B \subseteq \{1,\dt,n\},\, |B|=2}s_B$. As we have seen in the proof of Theorem \ref{thm:main},
    $$ \psi_{B_n}(w, \sigma_0)=\nu_n(w)=0.$$

Therefore, it suffices to show that $\psi_{B_n}(w, \tau)>0$ for any nontrivial $B_n$-irrep $\tau \ncong \sigma_0$. Let us consider the following cases:

\begin{itemize}
\item
\textbf{Case 1:} $\tau=([n-k],[k])$ for some $1\leq k \leq n-1$. For each $A \subseteq \{1,\dt,n\}$ of size $k$, and $B\subseteq \{1,\dt,n\} $ of size $2$, we have
$$ \Delta(s_B)A= (1-(-1)^{|A\cap B|})A=\begin{cases} 
2A & \text{if } |A\cap B|=1 \\
0 & \text{otherwise} \end{cases} $$
For each such $A$ there are $k(n-k)$ subsets $B$ such that $|A \cap B|=1$. Therefore,
$$ \Delta(w_N)A=\sum_{B \subseteq \{1,\dt,n\},\, |B|=2} \Delta(s_B)A=2k(n-k)A.$$
We conclude that $\Delta(w_N)$ is a scalar operator on $V_n^k$ with a single eigenvalue $2k(n-k)$. Therefore,
$$ \psi(w,\tau) \geq \psi(w_N,\tau)=2k(n-k) >0.$$
\item
\textbf{Case 2:} $\tau=([n-1,1],\emptyset)$. By Lemmas \ref{lem:(sigma,empty)} and \ref{lem:complete_graph_ev},
$$ \psi_{B_n}(w,\tau)\geq \psi_{B_n}(w_T,\tau)=
\psi_{S_n}(w_T,[n-1,1])=n>0.$$
 
\item
\textbf{Case 3:} $\tau \notin \fnm$. We have assumed that $\tau \ncong \sigma_0$, therefore $\tau \notin \mathcal F_n$. By Theorem \ref{thm:stronger_main} and Case 2:
$$ \psi_{B_n}(w,\tau) \geq \psi_{B_n}(w, ([n-1,1],\emptyset)) > 0.$$
\end{itemize}
As all the cases were covered, our proof is complete.
\end{proof} 
\section{Concluding remarks} \label{sec:conclusion}

In this paper we have shown that for general elements of the form $w=w_T+w_N$, the spectral gap $\psi_{B_n}(w)$ always comes from one of the $n+1$ irreps in the family $\mathcal F_n$, and that all of these irreps are needed in order to cover for all elements $w$ of this type.

Cesi's result, on the other hand, shows that when $w_N$ is supported on sets of size $1$, the spectral gap comes from one of two representations, of total dimension $2n-1$. This leaves room for questions for future study. Let us consider an element $w=w_T+w_N \in \R[B_n]$ where $w_T=\sum_{1\leq i < j \leq n} a_{ij}(ij)$ and $w_N=\sum_{A\subseteq \{1,\dt,n\}} \alpha_A s_A$. Let us say that $w$ is $k$--upper bounded if $w_N$ is supported on sets of size $\leq k$, and that $w$ is $k$--lower bounded if $w_N$ is supported on sets of size $\geq k$.

\begin{question} For any $1\leq k \leq n$, find a minimal collection of irreps such that the spectral gap for any $k$--upper bounded element comes from that collection. In particular, for any fixed $k$, is the size of such a minimal collection bounded as $n\rightarrow \infty$? Is the total dimension of such a minimal collection bounded by a polynomial in $n$?
\end{question}

Similarly, we can ask these questions about $k$--lower bounded elements:

\begin{question} For any $1\leq k \leq n$, find a minimal collection of irreps such that the spectral gap for any $k$--lower bounded element comes from that collection. In particular, for any fixed $k$, is the size of such a minimal collection for $(n-k)$--lower bounded elements bounded as $n\rightarrow \infty$? Is the total dimension of such a minimal collection for $(n-k)$--lower bounded elements bounded by a polynomial in $n$?
\end{question}

Answering these questions would probably require more refined techniques for construction of elements $w$ which require specific irreps, or new methods for comparing minimal eigenvalues between different irreps.

\subsection*{Acknowledgements} The second author was supported by a research grant ANRF/ECRG/
2024/006021/PMS.


\end{document}